\newtheorem{lemma}{Lemma}
\begin{document}
\title{\LARGE \bf Fast AC Power Flow Optimization using Difference of Convex Functions Programming}

\author{ 
  Sandro Merkli\thanks{
    Automatic Control Lab,
    ETH Zurich, Physikstrasse 3, 8092 Zurich
    \tt{ \{merkli,smith,juanj\}@control.ee.ethz.ch}}
  \thanks{
    Inspire-IfA, Inspire AG, Technoparkstrasse 1, 8005 Zurich. 
    \tt{\{merkli,domahidi\}@inspire.ethz.ch}}
    ,
  Alexander Domahidi\thanks{
    embotech GmbH, Physikstrasse 3, ETL K10.1, 8092 Zurich,
  \tt{\{jerez,domahidi\}@embotech.com}} \footnotemark[2] , 
  Juan Jerez\footnotemark[1] \footnotemark[3]\\
  Manfred Morari\footnotemark[1],
  Roy S.\ Smith\footnotemark[1]\\
}

\def \C { \mathbb C }
\def \R { \mathbb R }
\def \diag { \operatorname{diag} }
\def \minim { \operatorname*{minimize} }
\def \maxim { \operatorname*{maximize} }
\def \st { \operatorname*{subject\ to} }
\def \real { \operatorname{Re} }
\def \imag { \operatorname{Im} }
\def \eig { \operatorname{eig} }
\def \bmb { \begin{bmatrix} }
\def \bme { \end{bmatrix} }
\newcommand{\cve}[1] {#1}
\newcommand{\jcve}[1] {\bar{#1}}

\maketitle
\thispagestyle{empty}
\pagestyle{empty}
\begin{abstract}
%
%
An effective means for analyzing the impact of novel operating schemes on power
systems is time domain simulation, for example for investigating
optimization-based curtailment of renewables to alleviate voltage violations.
Traditionally, interior-point methods are used for solving the non-convex AC
optimal power flow (OPF) problems arising in this type of simulation.
This paper presents an alternative algorithm that better suits the simulation
framework, because it can more effectively be warm-started, has linear 
computational and memory complexity in the problem size per iteration and
globally converges to Karush-Kuhn-Tucker (KKT) points with a linear rate if
they exist.
The algorithm exploits a difference-of-convex-functions reformulation of the
OPF problem, which can be performed effectively.
Numerical results are presented comparing the method to state-of-the-art
OPF solver implementations in MATPOWER, leading to significant speedups
compared to the latter.
\end{abstract}

\section{Introduction}
The amount of renewable energy sources (RES) in distribution systems is
steadily increasing~\cite{ren212015global}. Due to their volatility and limited
predictability, they are posing new challenges to power system operation and
planning. 
%
%
%
A prominently observed consequence of the increase in renewable power in-feeds
are local voltage limit violations~\cite{Ayres2010}. Traditionally, the remedy 
for these violations required expensive line capacity extensions. Recent studies
have shown that such extensions could be reduced by a shift in operational
paradigms from rule-based to optimization-based approaches, see for
example~\cite{Warrington2014,Ulbig2012,Vrettos2013}.

Since it is non-trivial to predict the impact of such shifts in operational
paradigms on power systems, time-domain simulations provide valuable
insight~\cite{Ulbig2012}. System-wide simulations over extended periods of time
can demonstrate seasonal impacts and yield statistical data. This data provides
a more in-depth view than worst-case snapshot studies, which are the current
industrial practice.  While the latter only provides information on violation
severity, the former also gives a sense of how often they occur.
However, if the impact of optimization-based approaches is to
be simulated over such long periods of time and for different scenarios, a large
number of optimization problems need to be solved. In the case of dispatch
optimization, sampling times for the control are on the order of 15 minutes.
This means that proposed optimization problems can typically be solved fast
enough for on-line operation using state-of-the-art software such as
MATPOWER~\cite{Zimmerman2011}. However, in simulations, solving the
optimization problems is the most computationally expensive task. Therefore,
efficient numerical methods are essential for performing simulations in a
practical time frame. 

In many cases, the problems proposed in optimization-based operation schemes
are related to a class of problems collectively referred to as optimal power
flow (OPF) problems. An extensive amount of literature exists on solving such
problems and a recent survey is given
in~\cite{Frank2012surveyI,Frank2012surveyII}. However, due to the non-convexity
and large scale of the problem, it remains an active research topic. In fact,
the non-convexity makes the problem computationally intractable to solve to
global optimality in general. However, critical points can in most
cases be found efficiently if they exist, for example using sequential quadratic
programming (SQP)~\cite{Robinson1972} and an initial guess that is close to the
critical point, or with the difference-of-convex-functions method used in this
paper~\cite{LeThi2014}.  The most popular approaches to solving AC OPF problems
are interior point methods~\cite{Torres2001} and sequential convex
approximation methods~\cite{Alsac1990,Chang1990}. While the former are
numerically robust and well-studied, the latter tend to be faster according
to~\cite{Frank2012surveyI}. There are two main sequential convex approximation
approaches: Sequential linear programming (SLP) and SQP. These schemes
approximate the original problem iteratively with convex linear and quadratic
programs, respectively. Most implementations of these approaches use
conventional power flow computations between their iterations to restore
feasibility of the Kirchhoff equations.  In general, SLP/SQP methods require
extensions to become globally convergent, which reduces their
performance~\cite{Frank2012surveyI}. A recently developed alternative approach
is to solve a convex semidefinite programming (SDP) relaxation of the
problem~\cite{Lavaei2012,Molzahn2013a}. The optimal value of this relaxation is
either the globally optimal value of the non-convex problem or in the worst
case only a lower bound on the latter. 


The reformulation presented in this paper allows for the solution of AC optimal
power flow problems using difference of convex functions programming. This
higher order approximation is tighter than the one made in SLP/SQP methods. In
comparison with SLP, the method has no issues of unboundedness of the
relaxations and is globally convergent without extensions. The presented method
operates entirely in the voltage space, satisfying the Kirchhoff equations by
design and thereby eliminating the need for conventional power flow
computations.
In comparison with the SDP relaxation, it converges to critical points at a
lower computational cost than the former, especially when warm-started. Also,
the SDP relaxation provides only a lower bound on the objective in the worst
case, which does not provide a feasible point. In contrast, the proposed method
always converges to a critical point if one exists. While certifying local
optimality of these points is not straightforward, experiments show that they
represent acceptable solutions. This statement will be quantified in the
numerical results section.

In this work, we present the application of our method to a specific example
of an optimization-based operation scheme designed to reduce RES curtailments.
In this example, the distribution system operator (DSO) is tasked with keeping
the system stable and within the allowed operating conditions. Normally, 
the DSO is not operated for financial gain and its actions are bound to
regulations, for example the EEG in Germany~\cite{eeg2014} or the European
equivalent ENTSO~\cite{entsoe}. 
The range of actions the DSO can take includes
adjusting setpoints of generators and curtailing renewable energy sources.
Approaches for finding such points
currently used in practice are usually rule-based. Such rules involve a
significant amount of tuning and rarely come with mathematical guarantees.
Additionally, costs for adjustments can only indirectly be taken into account. 

While the rest of the paper is developed with this specific example in mind,
the theory applies to a wide range of problems involving similar constraints,
including standard economic dispatch. In particular, any AC power flow 
optimizations can make use of the decomposition technique presented here.

\subsection{Summary of contribution}
In this paper, we present a novel method for solving a class of optimal power
flow problems that is particularly suited for time-domain system simulations.
\begin{enumerate}[(i)]
  \item \emph{Formulation}: We propose an OPF-like optimization problem to
    reduce curtailment in distribution grid operation. While the constraints
    are similar to economic dispatch, the cost function is formulated
    specifically to represent the cost faced by the system operator.
  \item \emph{Reformulation into a difference-of-convex functions problem}: 
    We give an efficient method for transforming the given OPF problem
    into a difference of convex functions problem. Its computational complexity
    is linear in the problem size. This reformulation preserves the
    sparsity of the problem while at the same time leading to the sequential
    convex relaxations being as close to the original non-convex problem as
    possible within the DC programming framework.
  \item \emph{Efficient solution of convex subproblems}: The difference
    of convex functions approach solves the non-convex problem using a 
    series of convex approximations, in this case second-order cone programs
    (SOCPs) that can be reformulated as convex quadratically constrained
    linear problems (QCLPs). We present an approach using accelerated dual 
    projected gradient methods to solving these QCLPs that exploits their
    structure. This leads to the complexity of all iteration computations as
    well as the required amount of memory growing linearly with the problem
    size.
\end{enumerate}

\subsection{Outline}
The rest of this paper is structured as follows: Section~\ref{sec:prelim}
introduces some preliminaries. In Section~\ref{sec:reform}, we present a
reformulation of the optimization problem such that the difference-of-convex-functions 
method is applicable. Section~\ref{sec:effinner} outlines an efficient
method to solve the convex inner problems arising in the proposed algorithm. In
Section~\ref{sec:numres}, numerical results are presented and discussed.  Final
conclusions are presented in Section~\ref{sec:conclusion}.

\section{Preliminaries}
\label{sec:prelim}
This section outlines both the model of the power system as well as the
optimization-based control strategy we propose. Basic notation is introduced,
assumptions are clarified and current operational practice is described.

\subsection{Notation}
The power grid is modeled as an undirected graph with $M$ vertices and $L$
edges. Vertices model buses, while edges model power lines. Each line (say, from
bus $j$ to bus $l$) has admittance $y_{jl} \in \C$. Each bus $j$ has an
associated voltage $v_j \in \C$ and power in-feed $s_j \in \C$, where
$\real(s_j)$ denotes active and $\imag(s_j)$ denotes reactive power.
Let $v,s \in \C^M$ be the stacked versions of the bus voltages and powers,
respectively. The admittance matrix of the grid is given as 
\begin{equation}
  Y_{jl} := \begin{cases} y_{jl} & \text{if } j \ne l, \\
  y^{\text{sh}}_{j}-\sum_{k=1,k\ne j}^M y_{jk} & \text{if } j = l.
  \end{cases}
\end{equation}
where $y_j^{\text{sh}} \in \C$ are shunt admittances. The Kirchhoff equations
for the system can hence be written in matrix form: 
\begin{equation}
  \label{eqn:kirchhoff}
  \diag(v)\bar Y \bar v = s,
\end{equation}
where $\bar \cdot$ describes the (element-wise) complex conjugate.
Let $e_k$ denote the $k$-th unit vector with appropriate dimension. Let
$(\cdot)^r := \real(\cdot), (\cdot)^q := \imag(\cdot)$ and let $r_k, q_k$ be
the $k$-th rows of $\real(Y)$ and $\imag(Y)$, respectively. 
For vectors $a \in \C^n$, define 
\begin{equation}
  J(a) := \left\{ k \in \{1,\ldots,n\} \Big| a_k \ne 0 \right\},
\end{equation}
and for matrices $A \in \C^{n \times n}$, let 
\begin{equation}
  \begin{aligned}
    J(A) :=& \;\Big\{ k \in \{1,\ldots,n\} \Big| \\
           & \;\exists j \in \{1,\ldots,n\} : A_{kj} \ne 0 \text{ or } A_{jk} \ne 0
    \Big\}.
  \end{aligned}
\end{equation}
This means $J(\cdot)$ returns the indexes of rows and columns with at least one
nonzero entry. We then use the notation $A_B$ to denote a version of $A$ with
only the rows and columns with indexes from a given set $B$.

\subsection{Operational constraints}
\label{ssec:opcon}
The constraints represent limits introduced by the system operator are either due
to regulations or to avoid damage to the system. Firstly, the voltage magnitude
has to be within a fixed interval for each bus $j$:
\begin{equation}
  \label{eqn:vlim}
  v_{\min,j} \le \left| v_j \right| \le v_{\max,j}.
\end{equation}
These limits are important for distribution grids, since the assumption of
low-resistance lines commonly made in transmission grids does not hold.
This means there can be significant discrepancies in the voltages between
two endpoints of a line. Additionally, one of the main problems faced by DSOs
are voltage constraint violations due to local renewable power in-feeds.
Finally, the current through each line $(j,k)$ is limited for thermal reasons:
\begin{equation}
  \label{eqn:llim}
  |y_{jl}||v_j - v_l| \le i_{\max,jl},
\end{equation}
The limits in~\eqref{eqn:vlim} and~\eqref{eqn:llim} together will hereafter be
referred to as the operational constraints for the power grid.
The DSO action space is modeled as an interval of active and reactive power for
each bus $j$:
\begin{equation}
  \label{eqn:slim}
  \begin{aligned}
    p_{\min,j} &\le \real( s_j) \le p_{\max,j}, \\
    q_{\min,j} &\le \imag( s_j) \le q_{\max,j}.
  \end{aligned}
\end{equation}
For buses at which the DSO cannot intervene, the upper and lower limits
in~\eqref{eqn:slim} are equal. Let $(s^0, v^0)$ be an operating point of the
power grid that represents the state of the distribution grid without any DSO
intervention. If this point satisfies all operational
constraints~\eqref{eqn:vlim} and~\eqref{eqn:llim}, no DSO intervention is
required. 
Otherwise, some limits are violated and the task of the DSO is then to find a
point $(s,v)$ that satisfies all operational constraints, but also lies within
its action space~\eqref{eqn:slim}. 

\subsection{DSO optimization problem}
The penalization for introduced deviations to power setpoints is modeled linearly
here, while voltage deviations are interpreted as an effect of changing powers
without a direct cost. This is the case for example in Germany~\cite{eeg2014}.
Even though the DSO is not run for profit, its operational cost has to be
covered by the power consumers. It is therefore advisable to perform a social
welfare optimization for least cost: 
\begin{subequations}
  \label{eqn:opf0}
\begin{align}
  \minim_{\cve s \in \C^M, \cve v \in \C^M} &\;\; 
    \|\real(\cve s-\cve s_0)\|_1 + \|\imag(\cve s-\cve s_0)\|_1 
    \label{eqn:opf0_cost}\\
    \st &\;\; \diag(\cve v) \jcve Y \jcve v = \cve s, \label{eqn:opf0_kirch}\\
      &\;\; v_{\min,k} \le \left| v_k \right| 
        \le v_{\max,k}, \label{eqn:opf0_vlim} \\
      &\;\; p_{\min} \le \real( s) \le p_{\max}, \label{eqn:opf0_plim} \\
      &\;\; q_{\min} \le \imag( s) \le q_{\max}, \label{eqn:opf0_qlim} \\
      &\;\; |y_{jl}||v_j - v_l| \le i_{\max,(j,l)}, \label{eqn:opf0_line} \\
      &\;\; k \in \mathcal M,\quad (j,l) \in \mathcal E,
\end{align}
\end{subequations}
where the $1$-norm cost function is proportional to the monetary cost for the
power deviations the DSO introduces. For renewable in-feed curtailment, this
situation is commonplace in some European countries, where the operator is
typically required by law to pay the nominal price for available power,
regardless of whether it is used or curtailed. Since this is the most relevant
case here, the assumption is made that all costs are of this structure. 
However, the general framework presented in this work can be extended to use
any convex cost function. Problem~\eqref{eqn:opf0} will hereafter be
referred to as the OPF problem.
It is non-convex due to the quadratic Kirchhoff
equalities~\eqref{eqn:opf0_kirch} as well as the lower voltage magnitude
bounds~\eqref{eqn:opf0_vlim}.

\subsection{Difference-of-convex-functions (DC) programming}
The method used in this work for solving problem~\eqref{eqn:opf0} is called
difference-of-convex-functions (DC\footnote{Not to be confused with the
abbreviation ``DC'' for direct current, and the related approximations 
of the AC-OPF problem.})
programming. This section outlines the algorithm and presents some existing
related theoretical results. 
DC programming is a class of algorithms for solving problems of the form 
\begin{equation}
  \label{eqn:dc_gen0}
\begin{aligned}
  \minim_{x} & \;\; g_0(x) - h_0(x) \\ 
  \st &\;\; g_i(x) - h_i(x) \le 0, 
\end{aligned}
\end{equation}
where $i \in \{1,\ldots,m\}$ and the $g_i, h_i$ are convex, subdifferentiable
functions. This method was historically used for optimization
problems involving piecewise affine functions.  However, a wide range of
problems can be formulated as~\eqref{eqn:dc_gen0}, including all convex
optimization problems, optimization problems with binary variables, quadratic
equality constraints and higher-order polynomial constraints.  A recent survey
of the method and related theory is given in~\cite{An2005}, and~\cite{LeThi2014}
presents the basic algorithm, which is also given in Algorithm~\ref{alg:dca}
for completeness. 
The main idea of the algorithm is to solve a sequence of convex problems
obtained by linearizing the \emph{concave} parts of the constraints and
objective:
\begin{equation}
  \label{eqn:dc_gen_inner0}
\begin{aligned}
  \minim_{x} 
  &\;\; g_0(x) - \left[ h_0(\tilde x) + \nabla h_0(\tilde x) (x-\tilde x) \right] \\
      \st &\;\; g_i(x) - \left[ h_i(\tilde x) + 
  \nabla h_i(\tilde x) (x-\tilde x) \right] \le 0.
\end{aligned}
\end{equation}
The optimizer $x^*$ of~\eqref{eqn:dc_gen_inner0} is then used as the next point
of convexification $\tilde x$, and the process is repeated until convergence is 
reached. The feasible set of~\eqref{eqn:dc_gen_inner0} is a convex inner
approximation of that of~\eqref{eqn:dc_gen0}. This means
that~\eqref{eqn:dc_gen_inner0} is not necessarily feasible, even if the
original non-convex problem is. This is circumvented in the algorithm using a
penalty reformulation:
\begin{equation}
  \label{eqn:dc_gen_inner1}
\begin{aligned}
  \minim_{x,t} 
  &\;\; g_0(x) - 
  \left[ h_0(\tilde x) + \nabla h_0(\tilde x) (x-\tilde x) \right] + \beta^k t \\
      \st &\;\; g_i(x) - \left[ h_i(\tilde x) + 
  \nabla h_i(\tilde x) (x-\tilde x) \right] \le t, \\
  &\;\; t \ge 0,
\end{aligned}
\end{equation}
where $\beta^k \in \R_+$ is a penalty weight parameter that is updated after
each convexification using the rule in Algorithm~\ref{alg:dca}. 

Note the similarity of this scheme to other sequential convex programming
methods, most notably SQP and SLP. The key difference to those methods is that
the convex parts $g_i$ are retained in their original form, which yields
tighter approximations in the sequence of convex problems solved. This means in
particular that if the original problem had a bounded feasible set, all issues
of possible unboundedness that arise in SLP~\cite{Bazaraa2013} are avoided and
there is no need for trust region approaches and their associated performance
penalty.

\begin{figure}
  \vspace{0.2cm}
  \begin{algorithmic}[1]
    \small
    \State Let $x^{0}$ initial guess, $\beta^0, \delta_1,\delta_2 > 0$
    parameters, $\epsilon_x, \epsilon_t > 0$ tolerances
    \label{algstep:termination}
    \While{Not converged}
    \State $x^{k+1},\lambda^{k+1},t^*\gets $ Solution of \eqref{eqn:dc_gen_inner1} 
      \label{algstep:inner}
    \If{$\| x^{k+1} - x^k\| \le \epsilon_x$ and $t \le \epsilon_t$}
      \State Terminate, converged to local optimality.
      \EndIf \vspace{0.1cm}
      \State $r^k \gets \min\left\{\left(\|x^{k+1} - x^{k}\|_2\right)^{-1},
    \|\lambda^{k+1}\|_1 + \delta_1 \right\}$ \vspace{0.1cm}
    \State $\beta^{k+1} \gets  \begin{cases} \beta^k & \text{ if } \beta^k \ge r^k \\
     \beta_k + \delta_2 &  \text{ if }\beta^k < r^k \end{cases}$
    \EndWhile
  \end{algorithmic}
  \caption{Difference of convex functions algorithm from~\cite{LeThi2014},
    modified to use practical stopping criteria. The $\lambda^{k+1}$
    computed in Step~\ref{algstep:inner} is the vector of dual multipliers of
    the constraints of the inner problem. }
  \label{alg:dca}
\end{figure}
It is shown in~\cite{LeThi2014} that the algorithm presented here globally
converges to a KKT point of~\eqref{eqn:dc_gen0} with a linear rate, provided
one exists and standard constraint qualifications are satisfied. It is also
shown that the sequence of optimal values of the
approximations~\eqref{eqn:dc_gen_inner1} is monotonically decreasing.  This
holds for all initial choices of the algorithm parameters, which means no
a priori bound on the size of the penalty parameter $\beta_k$ is required. All
results also hold if, in addition to the constraints in~\eqref{eqn:dc_gen0}, a
constraint
\begin{equation}
  \label{eqn:adddccon}
x \in \mathcal C,
\end{equation}
for some convex, closed set $\mathcal C$ is added. The algorithms are then
simply modified to include the constraint~\eqref{eqn:adddccon} in each of the
convex approximations. It is worth noting that the choice of $g_i$ and $h_i$
is not unique for a given problem. The authors in~\cite{LeThi2014} make no
theoretical statements on the impact of the choice of the $g_i$ and $h_i$ on
the convergence speed. However, numerical experiments show that the choice does
have a strong impact on the number of iterations required. We will discuss an
effective technique for choosing the functions $g_i,h_i$ for the problem at
hand in Section~\ref{ssec:appofdc}.

\section{Reformulation of OPF as Difference-of-Convex-functions problem}
\label{sec:reform}
In this section, the DSO OPF problem~\eqref{eqn:opf0} is reformulated as a
QCLP, and an efficient way of computing the splits of the non-convex functions
into differences of convex functions is presented. These splits result in a
special structure of the convex sub-problems. We then show in
Section~\ref{sec:effinner} how to solve these sub-problems efficiently. 

\subsection{Reformulation as QCLP}
\label{ssec:reform_qclp}
As already shown in~\cite{Low2014}, OPF problems with linear cost functions
can be recast as non-convex quadratically constrained linear programs. A
similar technique will be applied here. First, let $s_0 \in \C^M, v_0 \in \C^M$
be the power and voltage vectors the system is operating at without any DSO
intervention. 
We now introduce the difference in voltages introduced by
the DSO as follows:
\begin{equation} \label{eqn:dv_def} v := v_0 + \Delta v \;\in \C^{M}, \end{equation}
where $\Delta v \in \C^M$ is the change from the starting point and $v$ is the
resulting voltage vector. The resulting change of powers $\Delta s \in \C^M$ can be
computed using the Kirchhoff equations~\eqref{eqn:opf0_kirch}:
\[ \Delta s = \diag(v_0)\bar Y\bar{\Delta v} + \diag(\Delta v)\bar Y \bar v 
+ \diag(\Delta v)\bar Y \bar{\Delta v}. \]
Define now $Y^{(k)}$ as a version of $Y$ with all but the $k$-th row set to
0. 
After some reformulation, we can write
\begin{subequations} 
  \label{eqn:pow_from_volt}
  \begin{align} 
    \Big(\real (\Delta s) \Big)_k &= z^TH_{r,k}z + h_{r,k}^T z, \\
    \Big(\imag (\Delta s) \Big)_k &= z^TH_{q,k}z + h_{q,k}^T z, 
  \end{align}
\end{subequations}
with $z := \bmb \real(\Delta v)^T & \imag(\Delta v)^T \bme^T \in \R^{2M}$, and
\begin{equation}
  \label{eqn:pfv_matrices}
  \begin{aligned}
    H_{r,k} &:= \bmb 
    \real(Y^{(k)}) & -\imag(Y^{(k)}) \\ 
    \imag(Y^{(k)}) & \real(Y^{(k)}) 
    \bme,  \\
    H_{q,k} &:= \bmb 
    -\imag(Y^{(k)}) & -\real(Y^{(k)}) \\ 
    \real(Y^{(k)}) & -\imag(Y^{(k)})  
    \bme. 
  \end{aligned}
\end{equation}
The linear parts in~\eqref{eqn:pow_from_volt} are given by 
\begin{align}
    \,&h_{r,k} :=\nonumber \\
    \,&\bmb 
    \Big( (v_0^r)_k r_k + (v_0^q)_k q_k\Big)^T + e_k \Big(r_k (v_0^r)_k + q_k(v_0^q)_k\Big) \\
    \Big( (v_0^q)_k r_k - (v_0^r)_k q_k\Big)^T + e_k \Big(q_k (v_0^r)_k + r_k(v_0^q)_k\Big)
      \bme,\nonumber \\
    \,&h_{q,k} := \\
    \,&\bmb 
    \Big( (v_0^q)_k r_k - (v_0^r)_k q_k\Big)^T - e_k \Big(q_k (v_0^r)_k + r_k(v_0^q)_k\Big) \\
    \Big( -(v_0^q)_k q_k - (v_0^r)_k r_k\Big)^T + e_k \Big(r_k (v_0^r)_k - r_k(v_0^q)_k\Big)
      \bme.\nonumber 
\end{align} 
Equations~\eqref{eqn:pow_from_volt} can now be used to express the constraints
on powers given in~\eqref{eqn:opf0_plim}--\eqref{eqn:opf0_qlim} as constraints
on $\Delta v$. Using~\eqref{eqn:dv_def}, the constraints~\eqref{eqn:opf0_vlim}
and~\eqref{eqn:opf0_line} can also be expressed in $\Delta v$. Finally,
problem~\eqref{eqn:opf0} can be rewritten entirely in the variable $z$:
\begin{subequations}
  \label{eqn:opf1}
  \begin{align}
    \minim_{z \in \R^{2M}} &\;\; \sum_{k=1}^M \big|z^TH_{r,k}z + h_{r,k}^Tz \big| + 
    \big|z^TH_{q,k}z + h_{q,k}^Tz \big| \label{eqn:opf1_cost} \\
    \st & \;\; z^TQ_iz + q_i^Tz + \gamma_i \le 0 \label{eqn:opf1_con}, \\
        & \;\; i \in \{1,\ldots,K\},\nonumber
  \end{align}
\end{subequations}
where $K := 6M+L$.  The constraints~\eqref{eqn:opf1_con} are reformulations of
the original constraints~\eqref{eqn:opf0_vlim}--\eqref{eqn:opf0_line}.  The
structures of the $Q_i$ are of particular importance in later sections, which
is why they are given here.
\begin{figure}
  \includegraphics[width=\columnwidth]{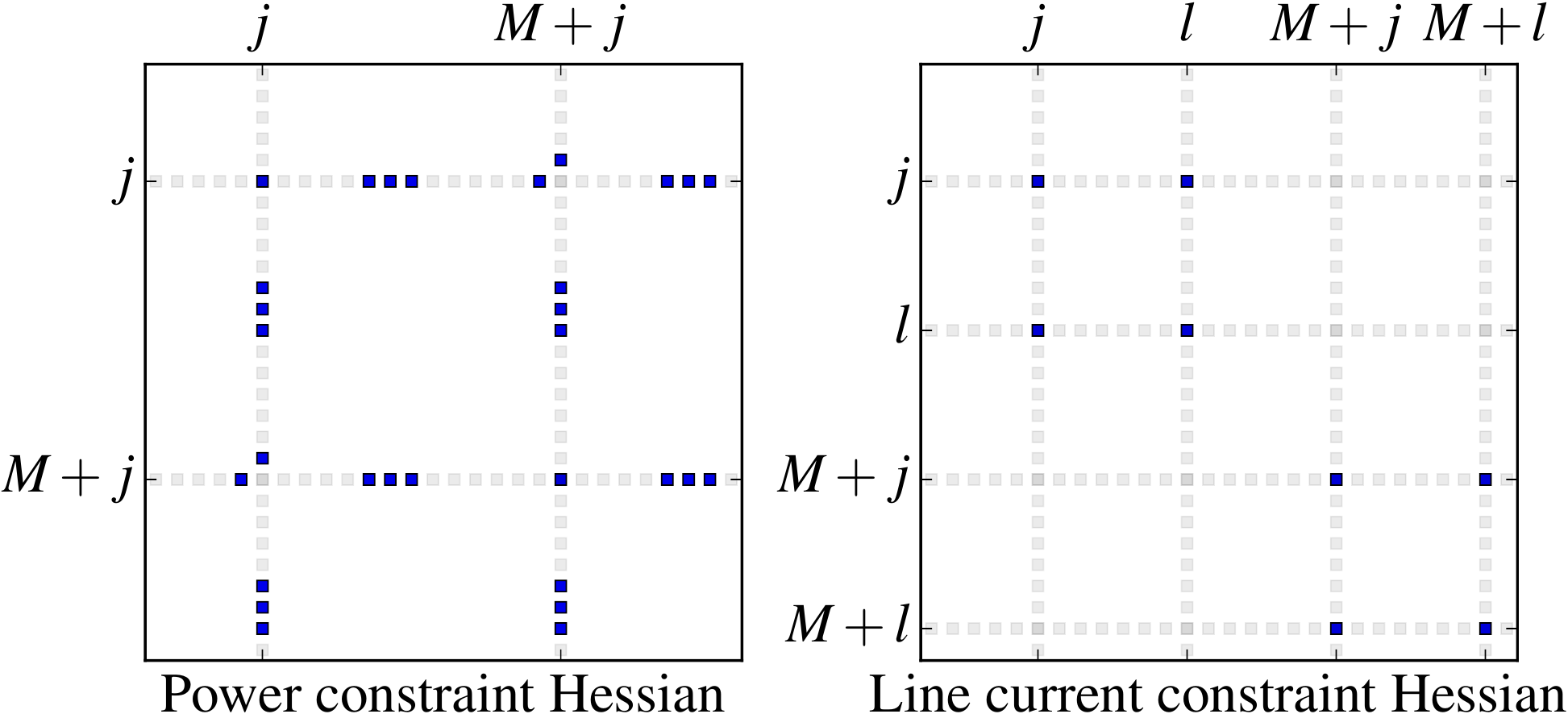}
  \caption{Sparsity pattern of example power constraint and line 
    constraint Hessian matrices $Q$. In this case, vertex $j$ has 4 neighbors.
    Note that the power constraint matrix is shown in symmetric form as defined
    in~\eqref{eqn:symmh}.}
  \label{fig:sparsevis1}
\end{figure}
\begin{enumerate}[(i)]
  \item The matrix $Q$ of power constraints~\eqref{eqn:opf0_plim}
    and~\eqref{eqn:opf0_qlim} are either the matrices
    $H_{r,k}$ or $H_{q,k}$ or negative versions thereof. 
  \item For the voltage bounds~\eqref{eqn:opf0_vlim}, $Q$ has $1$ (for upper
    bounds) or $-1$ (for lower bounds) on the $j$-th and $(M+j)$-th entries on
    the diagonal, and 0 everywhere else. 
  \item The matrix $Q$ of line constraints~\eqref{eqn:opf0_line}  have $1$ in
    positions 
    \[ (j,j),\; (l,l),\; (M+j,M+j),\;(M+l,M+l), \]
    of the diagonal and $-1$ in positions 
    \[ (j,l),\; (l,j),\; (M+j,M+l),\;(M+j,M+l). \]
\end{enumerate}
Note that the matrices from~\eqref{eqn:pfv_matrices} are not symmetric,
but they can be trivially made symmetric without changing the value of the
constraints in~\eqref{eqn:opf1_con}. We hence define the symmetric versions
\begin{equation}
  \label{eqn:symmh}
  \hat H_{r,k} := \frac{H_{r,k} + H_{r,k}^T}{2}, \qquad
  \hat H_{q,k} := \frac{H_{q,k} + H_{q,k}^T}{2}.
\end{equation}
A visualization of the described sparsity patterns is given in
Figure~\ref{fig:sparsevis1}.
Since the cost function~\eqref{eqn:opf1_cost} is inconvenient due to its
non-smoothness, a standard 1-norm reformulation with additional slack variables
$u \in \R^{2M}$ is performed.
Defining $x := \bmb z^T & u^T \bme^T$, problem~\eqref{eqn:opf1} can be written
as a standard QCLP:
\begin{subequations}
  \label{eqn:qclp}
\begin{align}
  \minim_{x} & \;\;  c^Tx  \label{eqn:qclp_cost} \\ 
  \st &\;\; x^TP_ix + p_i^Tx + \omega_i \le 0  \label{eqn:qclp_cons},\\
      &\;\; i \in \{1,\ldots,10M+L\},\nonumber \\
      &\;\; x_j \ge 0, \\
      &\;\; j \in \{2M+1,\ldots,4M\}, \nonumber
\end{align}
\end{subequations}
for appropriate $c,P_i, p_i, \omega_i$. The structure of the matrices $P_i$ 
is given by
\begin{equation}
  P_i = \bmb * & 0^{2M\times 2M} \\ 0^{2M\times 2M} & 0^{2M\times 2M} \bme
  \in \R^{4M \times 4M},
\end{equation}
where the upper blocks denoted by $*$ have the same sparsity patterns as the
matrices from Problem~\eqref{eqn:opf1}. The vectors $p_i \in \R^{4M}$ are
versions of the linear parts $h_{r,k},h_{q,k},q_i$ from
Problem~\eqref{eqn:opf1} with $2M$ additional entries. These additional entries
correspond to the coefficients of the slack variables $u$, at most one of which
is involved in each constraint. 

\subsection{Application of DC programming}
\label{ssec:appofdc}
In order to apply DC programming to solve~\eqref{eqn:qclp},
both~\eqref{eqn:qclp_cost} and~\eqref{eqn:qclp_cons} have to be written as a
difference of two convex functions as described in~\eqref{eqn:dc_gen0}. We
call this procedure a ``DC split''.  Since~\eqref{eqn:qclp_cost} is linear, no
split has to be performed, we can just define $g_0(x) := c^Tx$ and $h_0(x) :=
0$. The constraints~\eqref{eqn:qclp_cons} on the other hand can be non-convex,
so they have to be separated. Note that for every symmetric indefinite matrix
$P$, there exist infinitely many pairs $P^+, P^- \succeq 0$ such that
\begin{equation}
  \label{eqn:qsplit}
  P = P^+ - P^-.
\end{equation}
As a consequence, problem~\eqref{eqn:qclp}
can be rewritten as 
\begin{subequations}
  \label{eqn:qclp2}
\begin{align}
  \minim_{x} & \;\;  c^Tx \label{eqn:qclp2_cost} \\ 
  \st &\;\; \big(x^TP^+_ix + p_i^Tx + \omega_i\big) - \big(x^TP^-_ix\big) \le 0, 
      \label{eqn:qclp2_cons}\\
      &\;\; i \in \{1,\ldots,K\}\nonumber,\\
      &\;\; x_j \ge 0, \\
      &\;\; j \in \{2M+1,\ldots,4M\}, \nonumber
\end{align}
\end{subequations}
which now has the form given in~\eqref{eqn:dc_gen0}. Therefore, the algorithm
from Figure~\ref{alg:dca} can directly be applied.

The existence of infinitely many splits of the $P$ matrices
from~\eqref{eqn:qclp} raises the question of optimal split selection. One
approach that both intuitively makes sense and has been effective in
experiments is to split the matrices such that the $P_i^-$ have small
eigenvalues. This split strategy leads to the curvature of the concave terms $-x^TP_i^-x$
being smaller and therefore the linearized approximation being closer to the original
non-convex term. One can also use the freedom in the splits to induce structure
in the Hessian matrix $P^+_i$ in order to simplify the convex problems to be
solved. For example, the structure imposed here is for the matrices $P^+_i$ to
be diagonal, making their inverses trivial to compute.

\subsection{Analytic eigenvalue computations}
Since there are a large number of constraints of the
type~\eqref{eqn:qclp_cons}, calculating splits using numerical eigenvalue
decompositions would be computationally prohibitive. Due to the structure of
the $P_i$, eigenvalues can be computed analytically using the method described
in this section.
Note first that for the indexes $i$ corresponding to voltage or line
constraints, the eigenvalues of $P_i$ are trivial to compute due to their
simple structure. For the power constraints, we use the following Lemma:

\begin{lemma}
  The eigenvalues of the matrices from~\eqref{eqn:symmh} are given by
  \begin{subequations}
    \begin{align}
      \eig(\hat H_{r,k}) &= \left\{\frac
      {\real(Y_{kk}) \pm \sqrt{\real(Y_{kk})^2 - 4\|Y^{(k)}\|_2^2} }{2}, 0
      \right\}, \\[0.3cm]
      \eig(\hat H_{q,k}) &= \left\{ \frac
    {-\imag(Y_{kk}) \pm \sqrt{\imag(Y_{kk})^2 - 4\|Y^{(k)}\|_2^2}}{2},0\right\}.
  \end{align}
  \end{subequations}
\end{lemma}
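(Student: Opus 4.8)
The plan is to exploit the block $2\times 2$ structure of $H_{r,k}$ and $H_{q,k}$, where the blocks are built from the single row $Y^{(k)}$ of $Y$. Write $r_k := \real(Y^{(k)})$ (a row vector supported on the $k$-th row) and $q_k := \imag(Y^{(k)})$; then the symmetrized matrix $\hat H_{r,k}$ has rank at most $2$, because $Y^{(k)}$ itself has rank $1$, and both its column space and row space live in a two-dimensional space spanned by $e_k$ in each of the two $M$-blocks. So I expect $\hat H_{r,k}$ to have at least $2M-2$ zero eigenvalues, which immediately explains the ``$0$'' appearing in the eigenvalue set; the content of the lemma is identifying the remaining (at most) two nonzero eigenvalues.

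The key computational step is to restrict $\hat H_{r,k}$ to its range. Since $Y^{(k)} = e_k (r_k + \mathrm{i} q_k)$ with $r_k,q_k$ now read as the full rows, a convenient basis for the relevant subspace is $u_1 = (e_k^T, 0)^T$, $u_2 = (0, e_k^T)^T$, $w_1 = (r_k^T, q_k^T)^T$, $w_2 = (-q_k^T, r_k^T)^T$ (the latter two coming from the two block-columns of $H_{r,k}$ in~\eqref{eqn:pfv_matrices}). One then computes the compression of $\hat H_{r,k}$ to $\mathrm{span}\{u_1,u_2,w_1,w_2\}$, which is a small matrix whose entries are inner products such as $r_k r_k^T + q_k q_k^T = \|Y^{(k)}\|_2^2$ and $(r_k)_k = \real(Y_{kk})$, $(q_k)_k = \imag(Y_{kk})$. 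I expect this compression, after accounting for the non-orthogonality of the basis (i.e.\ solving the generalized eigenproblem $\det(A - \mu B) = 0$ with $B$ the Gram matrix), to collapse to a $2\times 2$ problem whose characteristic polynomial is $\mu^2 - \real(Y_{kk})\mu + \|Y^{(k)}\|_2^2 = 0$, giving exactly the stated roots via the quadratic formula. The $\hat H_{q,k}$ case is handled identically, the only change being a sign pattern in~\eqref{eqn:pfv_matrices} that flips $\real(Y_{kk})$ to $-\imag(Y_{kk})$ in the trace while leaving the product of the roots equal to $\|Y^{(k)}\|_2^2$.

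An alternative, cleaner route that I would actually prefer: observe that $\hat H_{r,k}$ can be written as $\frac12(H_{r,k}+H_{r,k}^T)$ where $H_{r,k}$ corresponds to the real-linear map $\Delta v \mapsto \overline{Y^{(k)}}\,\overline{\Delta v}$ followed by extraction of the real part, stacked appropriately; identifying $\R^{2M}\cong\C^M$, the quadratic form $z^T H_{r,k} z$ equals $\real\big( \overline{\Delta v_k}\, (Y^{(k)}\Delta v) \big) = \real\big( \overline{\Delta v_k}\, Y_{kk}\Delta v_k + \overline{\Delta v_k}\sum_{l\ne k} Y_{kl}\Delta v_l\big)$. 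This is a Hermitian form in the two complex scalars $\Delta v_k$ and $t := \sum_{l} Y_{kl}\Delta v_l$-type direction, i.e.\ effectively a form on $\C^2\cong\R^4$, whose matrix has characteristic polynomial $\lambda^2 - \real(Y_{kk})\lambda + |Y_{kk}|^2 + \|Y^{(k)}_{\text{off}}\|^2$-style expression; one checks the cross term reorganizes to $\|Y^{(k)}\|_2^2$ exactly. The nonzero eigenvalues of the $2M\times 2M$ matrix are then the two eigenvalues of this $2\times 2$ Hermitian block, and everything else is zero by the rank bound.

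The main obstacle I anticipate is bookkeeping: correctly tracking the real/imaginary block interleaving in~\eqref{eqn:pfv_matrices}, the factor of $\frac12$ from symmetrization, and which $e_k$-terms survive, so that the off-diagonal contribution consolidates into precisely $\|Y^{(k)}\|_2^2$ (including the diagonal entry $Y_{kk}$ inside that norm) rather than $\|Y^{(k)}\|_2^2 - |Y_{kk}|^2$ or a similar near-miss. Once the correct $2\times 2$ reduced form is in hand, the quadratic formula gives the claimed eigenvalues with no further difficulty, and the $\hat H_{q,k}$ case differs only by the sign substitution $\real(Y_{kk}) \leftrightarrow -\imag(Y_{kk})$ traced through the analogous computation.
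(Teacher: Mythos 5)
Your overall strategy is workable, and your second (``cleaner'') route is in fact the paper's own argument in light disguise: the paper passes from $\hat H_{r,k}=\bmb A_k & B_k\\ -B_k & A_k\bme$ to the Hermitian $M\times M$ matrix $A_k+\sqrt{-1}B_k$ via the standard block-eigenvalue identity, observes that this matrix is a permuted \emph{arrowhead} matrix with zero trailing diagonal (because $Y^{(k)}$ has a single nonzero row), and reads the two nonzero eigenvalues off the closed form $\eig(A)=\bigl\{\tfrac{\alpha\pm\sqrt{\alpha^2+4a^Tb}}{2},0\bigr\}$. Your ``Hermitian form in the two complex scalars $\Delta v_k$ and $t$'' is precisely that arrowhead matrix compressed to its range, and your first route is the same compression carried out in real arithmetic on a four-dimensional subspace. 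So the route is not wrong; the issue is that you never walk it.

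The genuine gap is that the entire content of the lemma is the pair of coefficients of the reduced quadratic (the trace and determinant of the compression), and you do not compute them: you announce what you ``expect'' the characteristic polynomial to be and explicitly defer the decisive step as ``bookkeeping''. That bookkeeping \emph{is} the proof --- the paper discharges it by invoking the arrowhead formula, which is exactly the computation you postpone. Two concrete symptoms show it was skipped. First, the real $2M\times 2M$ matrix has rank at most $4$, not $2$ (each nonzero eigenvalue of the Hermitian reduction appears with multiplicity two in the real representation), so your ``$2M-2$ zero eigenvalues'' does not follow from the rank claim as stated; you need either the complex identification or the four-dimensional compression with doubled eigenvalues. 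Second, your asserted constant term $+\|Y^{(k)}\|_2^2$ cannot be the outcome of the computation: it makes the discriminant $\real(Y_{kk})^2-4\|Y^{(k)}\|_2^2$ strictly negative (since $\real(Y_{kk})^2\le |Y_{kk}|^2\le\|Y^{(k)}\|_2^2$), which would give non-real eigenvalues of a real symmetric matrix. Carrying the compression through in fact yields a reduced polynomial of the form $\lambda^2-\real(Y_{kk})\lambda-\|a\|^2$ with $\|a\|^2$ built from the off-diagonal entries of the $k$-th row --- one positive and one negative eigenvalue, consistent with the plus sign inside the square root of~\eqref{eqn:arroweig} and with the indefiniteness that makes the DC split necessary in the first place. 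This is exactly the ``near-miss'' you flagged as the anticipated obstacle; resolving it, rather than pattern-matching the displayed formula, is where the proof lives.
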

\begin{proof}
  The proof is shown for $\hat H_{r,k}$ only, since the proof for the $\hat
  H_{q,k}$ is identical.  Note first that the $\hat H_{r,k}$ have a blocked
  structure:
\begin{equation}
  \label{eqn:blocked}
  \hat H_{r,k} = \bmb A_k & B_k \\ -B_k & A_k \bme = 
  \bmb A_k & B_k \\ B_k^T & A_k \bme.
\end{equation}
For matrices of this form, the identity
\begin{equation}
  \label{eqn:eigident}
  \eig\left( A_k + \sqrt{-1}B_k \right) = 
  \eig\left(\bmb A_k & B_k \\ -B_k & A_k \bme\right),
\end{equation}
holds~\cite{Golub2012}.
Both $A_k$ and $B_k$ are permuted arrowhead matrices.  A
matrix $A \in \C^{m \times m}$ is called \emph{arrowhead} if it has a structure
\begin{equation}
  A = \bmb \alpha & a^T \\ b & D \bme,
\end{equation}
with $\alpha \in \C$, $a,b \in \C^{m-1}$ and $D = \diag(d) \in \C^{(m-1)
\times (m-1)}$ for some $d \in \C^{m-1}$. In case $D = 0$, it can easily be
shown~\cite{OLeary1990} that
\begin{equation}
  \label{eqn:arroweig}
  \eig(A) = \left\{\frac{\alpha \pm \sqrt{\alpha^2 + 4a^Tb}}{2}, 0 \right\}.
\end{equation}
Next, note that $A_k$ only has one non-zero row at the same index as $B_k$ has
its only non-zero row, and the same holds for their columns. This means that
$A_k+\sqrt{-1}B_k$ is also arrowhead and its eigenvalues are the same as those
of $\hat H_{r,k}$ due to~\eqref{eqn:eigident}. Substituting $A_k =
\real(Y^{(k)}), B_k := \imag(Y^{(k)})$ and applying~\eqref{eqn:arroweig} yields
the lemma.
\end{proof}

Note that the application of~\eqref{eqn:arroweig} is particularly simple for
the case here, since $\hat H^{r,k}$ is built from $Y^{(k)}$, which in turn only
has as many entries as bus $k$ has neighbors. Since power system graphs are
generally very sparse, this yields a significant reduction in computational
cost over even a Lanczos-based or other iterative approximation of eigenvalues,
let alone a standard exact computation.
    
\subsection{Sparse splits}
At this point, the eigenvalues of all the matrices $P_i$ can be computed
efficiently.  However, directly applying the split in~\eqref{eqn:qsplit} would
lead to a loss of sparsity. Due to the sparse graph structure of the grid
matrix $Y$, the expressions $x^TP_ix$ only involve a small subset of the
variables in $x$ (specifically, the local variables for a bus and the variables
of its neighbors). This section introduces an alternative split that both
conserves sparsity in the constraints and also makes the $P^+_i$ diagonal. This
structure will then make the solution of the convex subproblems of the
algorithm much simpler, as will be outlined in later sections. This is because
the $P_i^+$ are used in the quadratic parts of the convex subproblems, whereas
the $P_i^-$ only appear in their linear terms. 

Define a sparse, diagonal matrix $D_i$ as follows:
\begin{equation}
  (D_i)_{jj} = \begin{cases} 
    1, & \text{if $j \in J(P_i) \cup J(p_i)$}, \\ 
    0, & \text{otherwise.} 
  \end{cases}
\end{equation}
This matrix hence has ones only at the row and column indexes at which either
$P_i$ and $p_i$ also have nonzeros.
We then define the alternative split
\begin{equation}
  \label{eqn:sparsesplit}
  P_i := \alpha D_i - (\alpha D_i - P_i),
\end{equation}
where $\alpha$ is the absolute value of the largest eigenvalue of $P_i$.
This sparse split still guarantees positive definiteness of the split matrices
since it only shifts the non-zero eigenvalues. 

\section{Efficient solution of inner problems}
\label{sec:effinner}

The bottleneck of the DC algorithm is the solution of the convex approximation.
In this section, an dual projected gradient method that has an iteration
complexity linear in the problem size. With the splits~\eqref{eqn:sparsesplit}
applied, the problem to be solved at each DC iteration has the form
\begin{subequations}
\label{eqn:dcinner_fromalgo}
\begin{align}
  \minim_{x \in \R^{4M}, t} &\;\; c^Tx + \beta^kt \\
  \st & \;\; x^TP_i^+x + \hat p_i(\tilde x^k)^Tx + \hat \omega_i(\tilde x^k)  \le t,  \\
      & \;\; i \in \{1,\ldots,10M+L\}, \quad t \ge 0, \\
      &\;\; x_j \ge 0, \\
      &\;\; j \in \{2M+1,\ldots,4M\}, \nonumber
\end{align}
\end{subequations}
where $\tilde x^k$ is the current point around which a convex approximation is 
formed, and 
\begin{equation}
  \begin{aligned}
    \hat p_i(\tilde x^k) &:= (p_i-2P_i^-\tilde x^k), \\
  \hat \omega_i(\tilde x^k) &:= \omega_i +(\tilde x^k)^TP_i^-\tilde x^k.
  \end{aligned}
\end{equation}
General-purpose sparse convex second-order cone programming codes such as
ECOS~\cite{Domahidi2013}, GUROBI~\cite{GurobiOptimization2014} or
MOSEK~\cite{ApS2015} can be used to solve these problems. However, the
structure of the problem suggests that a specialized solver could lead to
increased performance.  Firstly, the constraint Hessians $P_i$ are diagonal and
sparse, and all nonzero entries have the same values. Additionally, $P_i$ and
$p_i$ have the same nonzero patterns for any given $i$. It was also
experimentally observed that the subsequent convex approximations are often
similar, which suggests a warm-startable method could be beneficial. This
section will present an approach based on accelerated dual gradient descent
that was used in this work.

\subsection{Projected gradient method}
A well-known algorithm for solving optimization problems of the form
\begin{subequations}
\begin{align}
  \label{eqn:gradexample}
  \minim_x & \;\; f(x) \\ 
   \st &\;\; x \in \mathcal C
\end{align}
\end{subequations}
is given by the iteration
\begin{equation}
  \label{eqn:projgradalgo}
  x^{(k+1)} = \operatorname{proj}_{\mathcal C}
    \left(x^{(k)} - \alpha \nabla f(x^{(x)}) \right)
\end{equation}
where $\alpha$ is a step size and $\operatorname{proj}_{\mathcal C}$ is the
Euclidean projection onto the set $\mathcal C$. If $\mathcal C$ and $f(x)$ are
convex, this algorithm converges to the global minimum
of~\eqref{eqn:gradexample}, given that the step size is small enough. The rate of
convergence depends highly on $f(x)$, and the method can be accelerated by
varying $\alpha$ (see for example~\cite{Nesterov1983,Richter2012}). In order for
this algorithm to be efficient, the projection should be a simple operation. In
the next section, a reformulation of problem~\eqref{eqn:dcinner_fromalgo} is
given that achieves the latter. 

\subsection{Box-constrained inner problem formulation}
The intersection of the constraints in~\eqref{eqn:dcinner_fromalgo} is not easy
to project onto, hence direct application of~\eqref{eqn:projgradalgo} is not
efficient. Using two reformulations, the problem will be recast as a minimization
of a smooth function subject to box constraints. The first step is a lifting into
a higher-dimensional variable space: We introduce variables $y_i := x_i^2$ and
change the penalty function from an $\infty$-norm to a $1$-norm (another
possible penalty function shown in~\cite{LeThi2014}). The inner problem to be
solved then becomes
\begin{subequations}
  \label{eqn:dcinner_lifted}
\begin{align}
  \minim_{x,y,t} & \;\; c^Tx + \beta^k 1^Tt \label{eqn:liftedcost}\\ 
      \st & \;\; Ax + By - b \le t, \label{eqn:con_liftlin} \\
          & \;\; \diag(x)x - y = 0, \label{eqn:con_diagxy}\\
          & \;\; t \ge 0, \label{eqn:con_tpos} \\
          &\;\; x_j \ge 0, \\
          &\;\; j \in \{2M+1,\ldots,4M\}, \nonumber
\end{align}
\end{subequations}
where 
\begin{equation}
  A := 
    \begin{bmatrix} \hat p_1(\tilde x^k)^T \\ p_2(\tilde x^k)^T \\ 
      \vdots \\ p_K(\tilde x^k)^T
    \end{bmatrix}, \;
  B := 
    \begin{bmatrix} \diag(P_1^+)^T \\ \diag(P_2^+)^T \\ 
      \vdots \\ \diag(P_K^+)^T
    \end{bmatrix}, \;
  b := 
    \begin{bmatrix} \hat \omega_1(\tilde x^k) \\ \omega_2(\tilde x^k) \\ 
      \vdots \\ \omega_K(\tilde x^k)
    \end{bmatrix},
\end{equation}
where $K := 10M+L$. 
We now make use of the following lemma to relax the constraints
in~\eqref{eqn:con_diagxy}:
\begin{lemma}
  \label{lem:relax}
  Consider a version (P) of~\eqref{eqn:dcinner_lifted} with the
  constraints~\eqref{eqn:con_diagxy} relaxed to 
  \begin{equation} \diag(x)x - y \le 0. \label{eqn:proofcon}\end{equation}
  For every minimizer of (P) with one or more of~\eqref{eqn:proofcon} inactive,
  a minimizer with equal cost function can be found which has all the
  constraints~\eqref{eqn:proofcon} active.
\end{lemma}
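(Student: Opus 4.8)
The plan is to show that if a minimizer $(x^*,y^*,t^*)$ of (P) has some constraint $(x_j^*)^2 - y_j^* < 0$, then we can increase the corresponding $y_j$ component to restore equality without increasing the cost, and without violating any other constraint. The key observation is that $y$ appears in the objective~\eqref{eqn:liftedcost} only through the $t$ variables, and in the constraints~\eqref{eqn:dcinner_lifted} only via the term $By$ in~\eqref{eqn:con_liftlin} and via~\eqref{eqn:proofcon}. Since $B$ collects the rows $\diag(P_i^+)^T$ and by construction of the sparse split~\eqref{eqn:sparsesplit} all nonzero entries of $P_i^+$ equal $\alpha > 0$, the matrix $B$ has \emph{nonnegative} entries. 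This sign structure is what makes the argument work, and I expect verifying it carefully to be the main (though not difficult) obstacle.

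First I would isolate an index $j$ with $(x_j^*)^2 < y_j^*$ and consider decreasing $y_j^*$ to the value $(x_j^*)^2$, keeping all other components of $x^*,y^*,t^*$ fixed. Wait — the inactive direction goes the other way: $(x_j^*)^2 - y_j^* < 0$ means $y_j^* > (x_j^*)^2$, so I would \emph{decrease} $y_j$ down to $(x_j^*)^2$. Since $B \ge 0$ entrywise, decreasing $y_j$ can only decrease the left-hand side $Ax + By - b$ of~\eqref{eqn:con_liftlin}, so that constraint stays satisfied with the same $t^*$; the constraint~\eqref{eqn:proofcon} for index $j$ becomes active by construction and is untouched for other indices; constraints~\eqref{eqn:con_tpos} and the nonnegativity of $x$ do not involve $y$. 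Hence the modified point is feasible. The cost $c^Tx^* + \beta^k 1^Tt^*$ is unchanged because it does not depend on $y$. Repeating this coordinatewise (or doing it simultaneously for all inactive indices, since the adjustments are independent across coordinates of $y$) yields a minimizer with all constraints~\eqref{eqn:proofcon} active and the same cost, which is the claim.

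The only subtlety is confirming that $t^*$ remains optimal and feasible after the change: feasibility of~\eqref{eqn:con_liftlin} with the \emph{same} $t^*$ follows from $B \ge 0$ as above, and since the cost in $t$ is $\beta^k 1^T t$ with $\beta^k \ge 0$, there is no incentive or obligation to change $t^*$; keeping it fixed preserves both feasibility and the objective value. I would also note in passing that the argument uses nothing about $(x^*,y^*,t^*)$ being a global minimizer other than feasibility and its objective value — so the same construction maps any feasible point of (P) to one with all~\eqref{eqn:proofcon} tight and no larger cost, from which the statement about minimizers follows immediately. The main obstacle, then, is purely bookkeeping: making explicit that every entry of $B$ is nonnegative, which is where the specific form of the sparse split $P_i^+ = \alpha D_i$ with $\alpha = |\lambda_{\max}(P_i)| > 0$ enters.
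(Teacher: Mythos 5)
Your proposal is correct and follows essentially the same argument as the paper: decrease each slack $y_j$ with $(x_j^\star)^2 - y_j^\star < 0$ down to $(x_j^\star)^2$, noting that the entrywise nonnegativity of $B$ (inherited from the split $P_i^+ = \alpha D_i$ with $\alpha \ge 0$) guarantees the linear constraints remain satisfied with the same $t^\star$ and the cost is unchanged. Your write-up simply makes the paper's one-line justification more explicit.
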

\begin{proof}
The lemma will be shown by construction: Assume a point
  $(x^\star,y^\star,t^\star)$ is optimal for~(P), but a
constraint in~\eqref{eqn:proofcon} is not active. Then the $y_i$
corresponding to that constraint can be decreased to make the constraint
active without any change to the cost function or constraint satisfaction.
The latter is due to all entries of $B$ being non-negative. 
\end{proof}

Lemma~\ref{lem:relax} implies that we can simply solve the relaxed version
of~\eqref{eqn:dcinner_lifted} and then recover an optimal solution for the
latter.
In a second step, the relaxed version of~\eqref{eqn:dcinner_lifted} will be
dualized to yield a box-constrained problem. With some additional reformulation
(see Appendix~\ref{ssec:app_dualderi}), the dual of~\eqref{eqn:dcinner_lifted}
can be written as
\begin{subequations}
  \label{eqn:dcinner_dual}
\begin{align}
  \minim_{\lambda} & \;\; \frac{1}{4} 
    \lambda^TC\left(\diag(D^T\lambda)^{-1}\right)C^T\lambda + d^T\lambda \label{eqn:dualcost} \\ 
      \st & \;\; 0 \le \lambda \le 1.
\end{align}
\end{subequations}
for $C,D,d$ as derived in the appendix. 
This problem can now readily be solved using the projected gradient method and
its accelerated variants: The cost function is a sum of ``quadratic over linear''
functions, which are convex and differentiable:
\[ \frac{\partial }{\partial \lambda} \left(\frac{(a^T\lambda)^2}{b^T\lambda} \right)
  = 2a\frac{a^T\lambda}{b^T\lambda}  - b \frac{(a^T\lambda)^2}{(b^T\lambda)^2}.
\]
In order to avoid numerical issues with the inverse of $\diag(D^T\lambda)$, a
term $\varepsilon y$ can be added to~\eqref{eqn:liftedcost} for a small
$\varepsilon > 0$. This leads to the inverse in~\eqref{eqn:dualcost} becoming
$\diag(D^T\lambda + 1\varepsilon)^{-1}$, which is well-defined for all $\lambda
\ge 0$. Since the Lipschitz constant of~\eqref{eqn:dualcost} is not easily 
derived, an adaptive backtracking line search was used to determine the step
size $\alpha$ taken in~\eqref{eqn:projgradalgo}. The initial guess for the step
size was chosen to be 2 times the step size taken in the previous iteration.
This allows the algorithm to adapt its initial guess to both growing as well as
shrinking step sizes. At the same time, no convergence guarantees are
lost since the line search is still performed at each iteration.  This
technique led to a significant reduction in the average number of line search
iterations, speeding up the overall algorithm substantially.

\subsection{Computational complexity}
In order to investigate how scalable the presented method is, it is worthwhile
to compute the iteration complexity of the inner solver based on the problem 
parameters. Let $d_{\max}$ be the maximum number of neighbors of any vertex in
the power system graph, and recall that the number of buses and lines are
denoted by $M$ and $L$, respectively. The rows of the matrices $C$ and $D$ 
ultimately come from the $Y^{(k)}$ and variations thereof. Each of them has
at most $2d_{\max}$ entries. Since the number of rows in $C$ and $D$ is $\mathcal
O(M+L)$, this translates to the number of entries being $\mathcal O((M+L)d_{\max})$.
All that is required for the cost function and gradient computations is
products of $C^T$ and $D^T$ with $\lambda$ as well as some vector operations.
The projection is an elementwise operation and therefore~$\mathcal O(M+L)$. 
In summary, an iteration of the inner solver has linear complexity in the 
size of the grid if $d_{\max}$ is assumed to only grow very weakly with system
size, which is true in all test cases available.

\section{Numerical results}
\label{sec:numres}
In this section, we present numerical results on the performance and behavior
of the proposed algorithm. In order to make the results comparable to other
work in the field, some of the tests will be conducted on the IEEE benchmark
test systems available in MATPOWER~\cite{Zimmerman2011,Fliscounakis2013}. For
the experiments, a standalone implementation of the proposed method was
created, which will be referred to as DQ-OPF. The implementation is a
single-threaded, library-free ANSI C code, compiled with GNU GCC. The test
computer had a Core i7-4600U dual-core CPU clocked at 2.1 GHz and 8 GB of
memory. The operating system used was Debian Linux.

\subsection{Algorithm behavior}
\begin{figure}
  \includegraphics[width=\columnwidth]{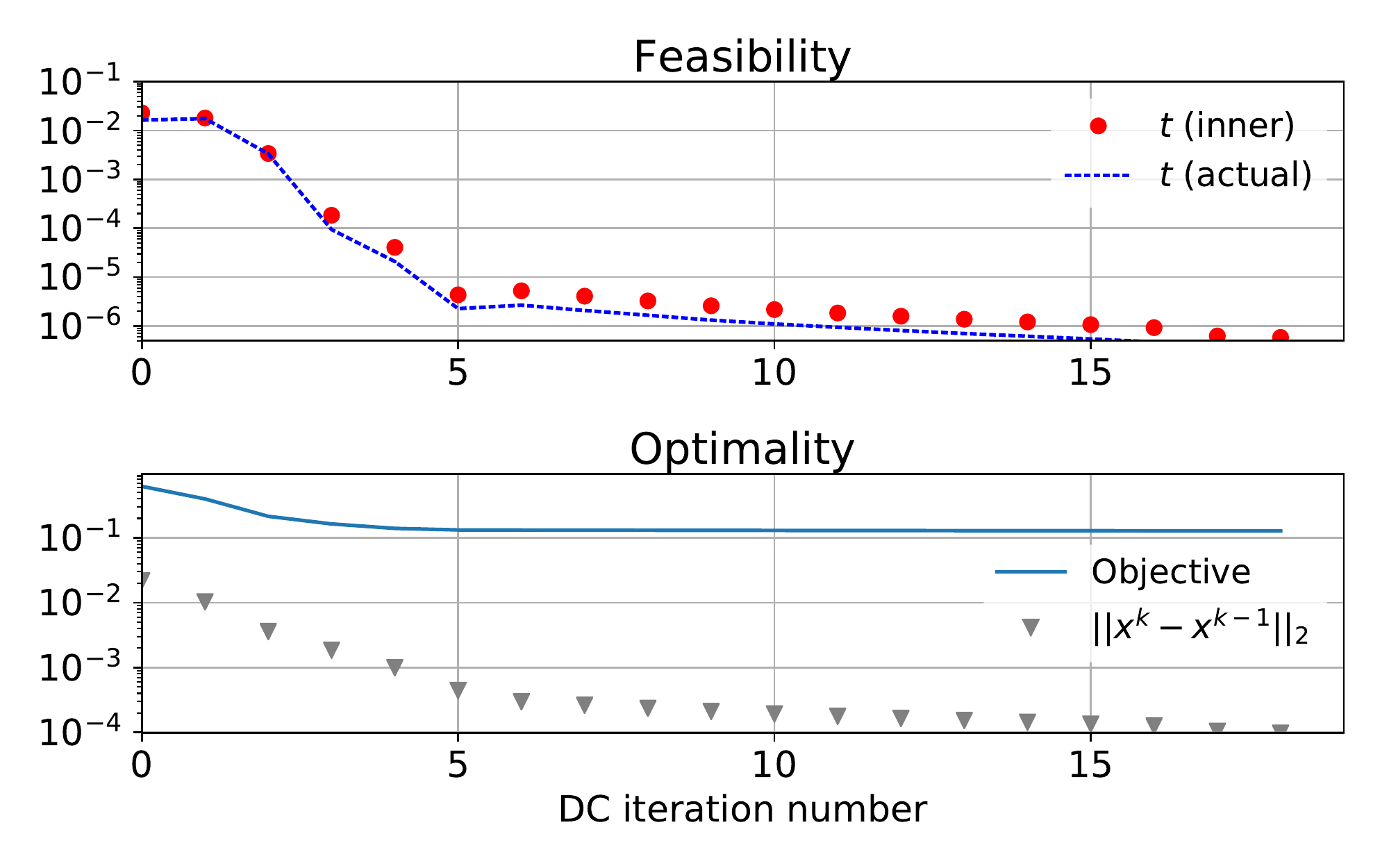}
  \caption{The DC method applied to the MATPOWER version of the IEEE 30-bus 
  grid, using dual projected gradient as the inner solver. The ``$t$ (inner)''
  line represents the maximum constraint violation of the convex approximation
  at that iteration, whereas the ``$t$ (actual)'' represents the maximum
  constraint violation of the original, non-convex problem. The lower subplot
  shows the true objective as well as the difference between subsequent
  iterates.} \label{fig:numres1}
\end{figure}
In the first set of results, the convergence behavior of the algorithm is
investigated. For these problems, a local optimum $(v^*,s^*)$ was found with
IPOPT~\cite{Waechter2006}. The entries of $v^*$ were then perturbed uniformly
and the corresponding perturbed powers were computed using the Kirchhoff
equations to yield a perturbed operating point $(\tilde v,\tilde s)$. 
The perturbation size was chosen to make the maximum constraint violation
about 100\%.  This was done in order to simulate the practical situation of the
power grid state being only slightly infeasible with respect to the operational
constraints, but respecting the Kirchhoff equations. The point $(\tilde
v,\tilde s)$ was used as starting point $(s^0,v^0)$ as defined in
Section~\ref{ssec:opcon}, and DQ-OPF started from there. 

An example solver run is shown in Figure~\ref{fig:numres1} with a termination
criterion of $\|x^k-x^{k-1}\|_2 \le 10^{-4}$. Within a small number of DC
iterations, the maximum constraint violation of the non-convex problem drops
below $10^{-4}$~per unit, which is well below $1\%$ relative accuracy. Note
also that 
the objective value does not improve
significantly past iteration 5. 

\begin{figure}
  \includegraphics[width=\columnwidth]{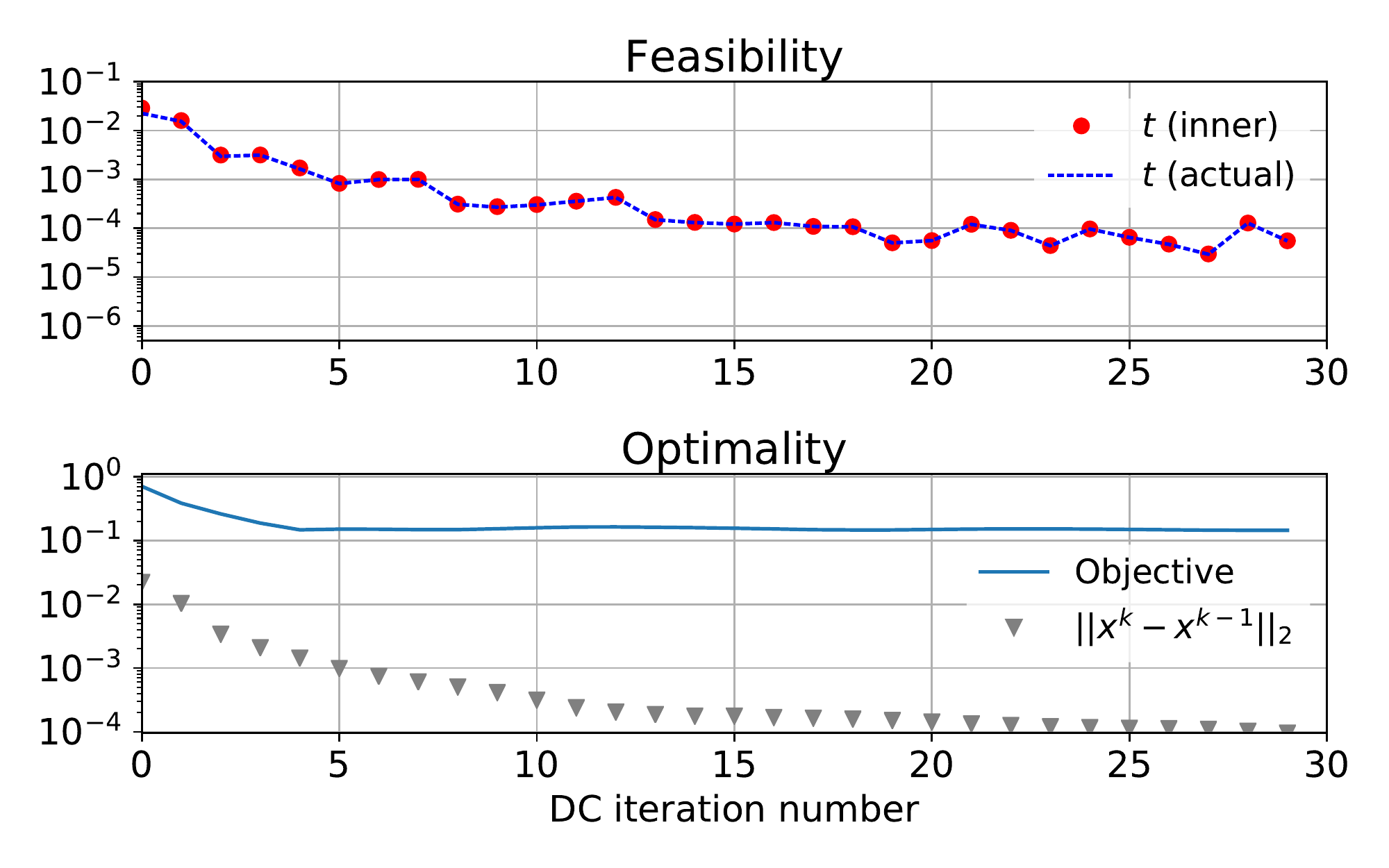}
  \caption{Solution of the same problem as in Figure~\ref{fig:numres1}, but 
    with the accuracy for the inner problem severely limited. The optimal 
    value of the inner problems no longer decreases monotonically, due to 
    the inaccurate inner solutions.}
  \label{fig:numres2}
\end{figure}

For the problem shown in Figure~\ref{fig:numres1}, the inner convex problems
were solved to high accuracy ($10^4$ inner iterations). In other sequential
convex programming methods, it is often observed that solving the intermediate
problems approximately can often be sufficient for
convergence~\cite{Heinkenschloss2002}. In order to investigate if this is also
the case with the methods presented here, the gradient solver iterations were
limited to 100 in the same problem as above, and the other parameters left
unchanged. The resulting run for the same problem is shown in
Figure~\ref{fig:numres2}. While the number of outer iterations required is
higher than before for the same accuracy, they are two orders of
magnitude cheaper computationally. Note that the objectives in
Figure~\ref{fig:numres1} and Figure~\ref{fig:numres2} converge to slightly
different values. This is due to the two solver runs converging to different
local optima.

\subsection{Performance}
\begin{table}
  \centering
  \renewcommand{\arraystretch}{1.3}
  \caption{Average time and objective for $1\%$ relative accuracy}
  \begin{tabular}{l   c c | c c}
    \hline
    & \multicolumn{2}{ c }{\textbf{IPOPT \& PARDISO}} 
      & \multicolumn{2}{ c }{\textbf{DQ-OPF}} \\
    & Time & Objective & Time & Objective \\
    6-bus    &  55 ms  &    0.9 &  2.2 ms &    0.9  \\
    9-bus    &  65 ms  &    1.9 &  2.3 ms &    1.5 \\
    14-bus   &  68 ms  &    1.3 &  3.5 ms &    1.0  \\
    30-bus   &  77 ms  &    1.5 &   10 ms &    1.3   \\
    39-bus   &  92 ms  &   12   &   23 ms &   11   \\
    57-bus   &  97 ms  &    2.9 &   25 ms &    1.6 \\
    118-bus  & 213 ms  &   17   &   76 ms &    3.7 \\
    2383-bus & 3.5 s   &   24   &   4.4 s &   10.4  \\
    2737-bus & 3.3 s   &   12   &   2.4 s &    7.6 \\
    3210-bus & 2.8 s   &   14   &   4.0 s &    9.2 \\
    9241-bus &  15 s   &   26   &    16 s &    6.8 \\
    \hline
  \end{tabular}
  \label{tbl:comparison}
\end{table}
In order to compare the implemented method to the state of the art, MATPOWER
test cases were used in conjunction with the 1-norm cost function, as described
in~\eqref{eqn:opf0}. Instead of solving the inner problems accurately as shown
in Figure~\ref{fig:numres1}, the inner solver was limited to 100--1000 iterations
depending on grid size, yielding the aforementioned calculation time
improvements. The DC solver parameters were tuned for one instance of the
problem and then reused across all runs. The solver was started at $(s^0,v^0)$.
As a reference, we used MATPOWER's IPOPT interface along with the parallel
PARDISO~\cite{Kuzmin2013,Schenk2008,Schenk2007} solver for linear systems.
Table~\ref{tbl:comparison} presents the results averaged over 100 runs with
random initial points created as in the previous experiment. In these
experiments, IPOPT was warm-started at the same point as DQ-OPF
using MATPOWERs warm-start functionality.
DQ-OPF is faster in many cases, with the speedups for the smaller
grids being substantial. For the larger grids, the run times are comparable to
the reference. Since no significant effort was put into optimizing the solver
for larger grids, further speedups can be expected in the proposed method 
through parallelization and more efficient code. For the largest grid, MATPOWER
ran into memory issues on the computer used. DQ-OPF requires a memory amount
linear in the problem size and hence had no such issues. Note also that IPOPT
was run with multi-threading enabled (2 threads) and the times shown are wall
clock, not CPU time.

Another observation is that the average objective values were consistently
smaller with the method used. This means the proposed method found local optima
with better objective values. A likely reason for this is the objective
function, which represents distance from the starting point $(s^0,v^0)$. 
The presented method tends to find local optima close to the point at which
it was started, whereas IPOPT (and interior-point methods in general) seem
to benefit less from warm-start information~\cite{John2008}.
This difference in objective values was made both when IPOPT was warm-started
as well as when the default settings (no warm-start) were used.

\subsection{Case study: Simulation experiment}
\begin{figure}[t]
  \includegraphics[width=\columnwidth]{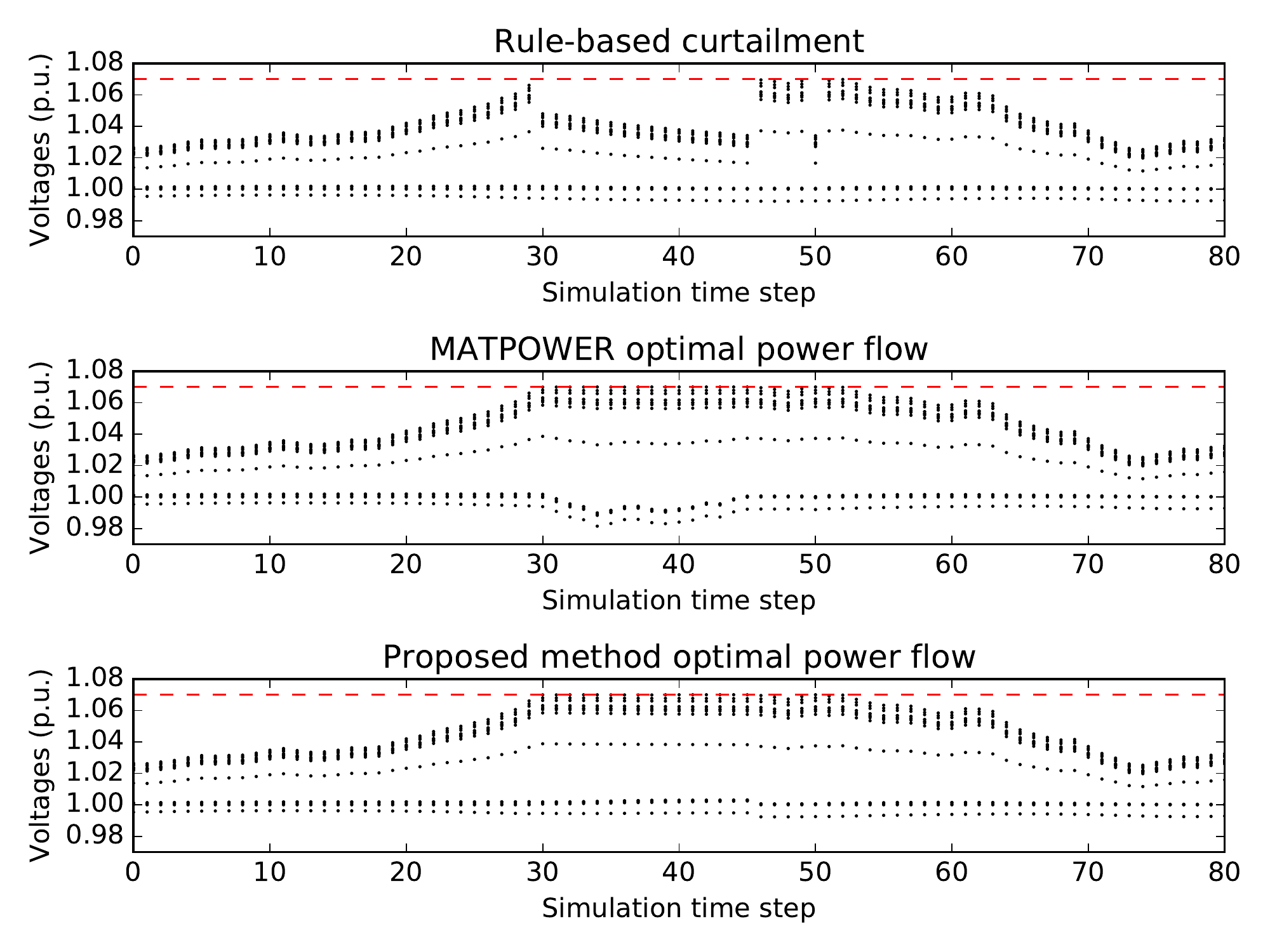}
  \caption{Voltage traces for all buses in the MV grid over the course of the
    simulation. Admissible limits were $[0.9,1.07]$ per unit. The OPF problem
    had to be solved at time instances 30 through 25 and 50.}
  \label{fig:numres3}
\end{figure}
\begin{figure}[t]
  \includegraphics[width=\columnwidth]{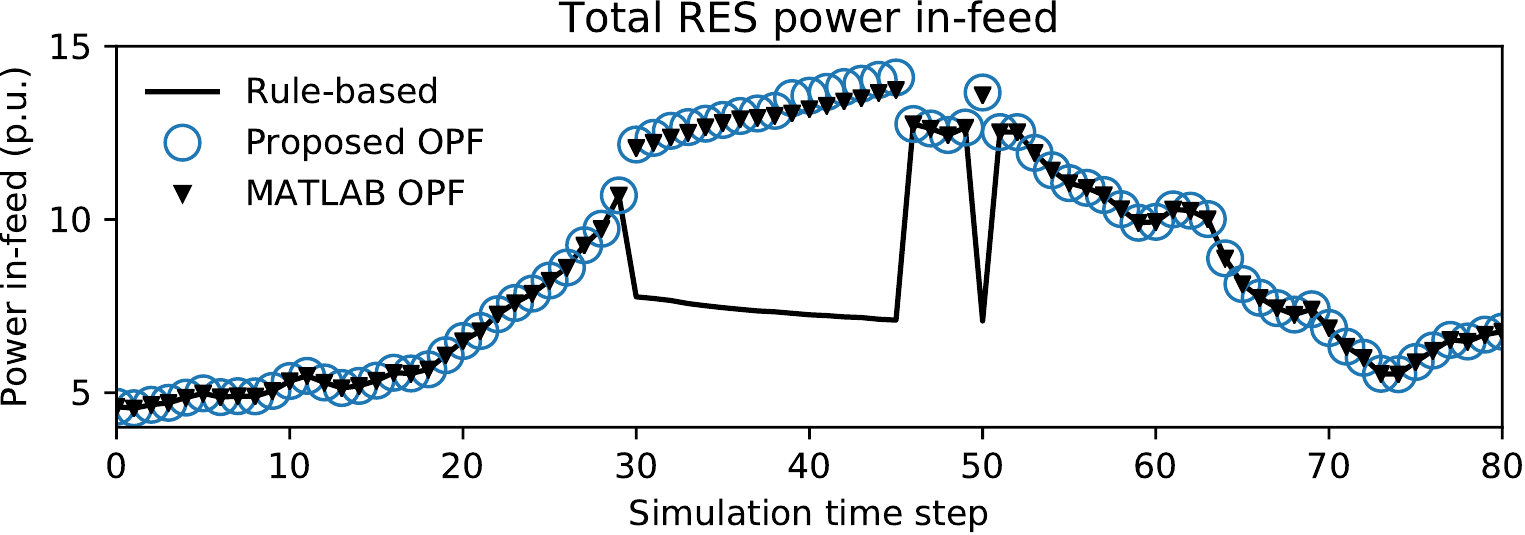}
  \caption{Total RES power in-feed for the MV grid over the simulation
    horizon. The MATPOWER and proposed OPF solutions are close, with the 
    proposed solution discarding slightly less renewable energy.}
  \label{fig:numres4}
\end{figure}
\begin{figure}[t]
  \includegraphics[width=\columnwidth]{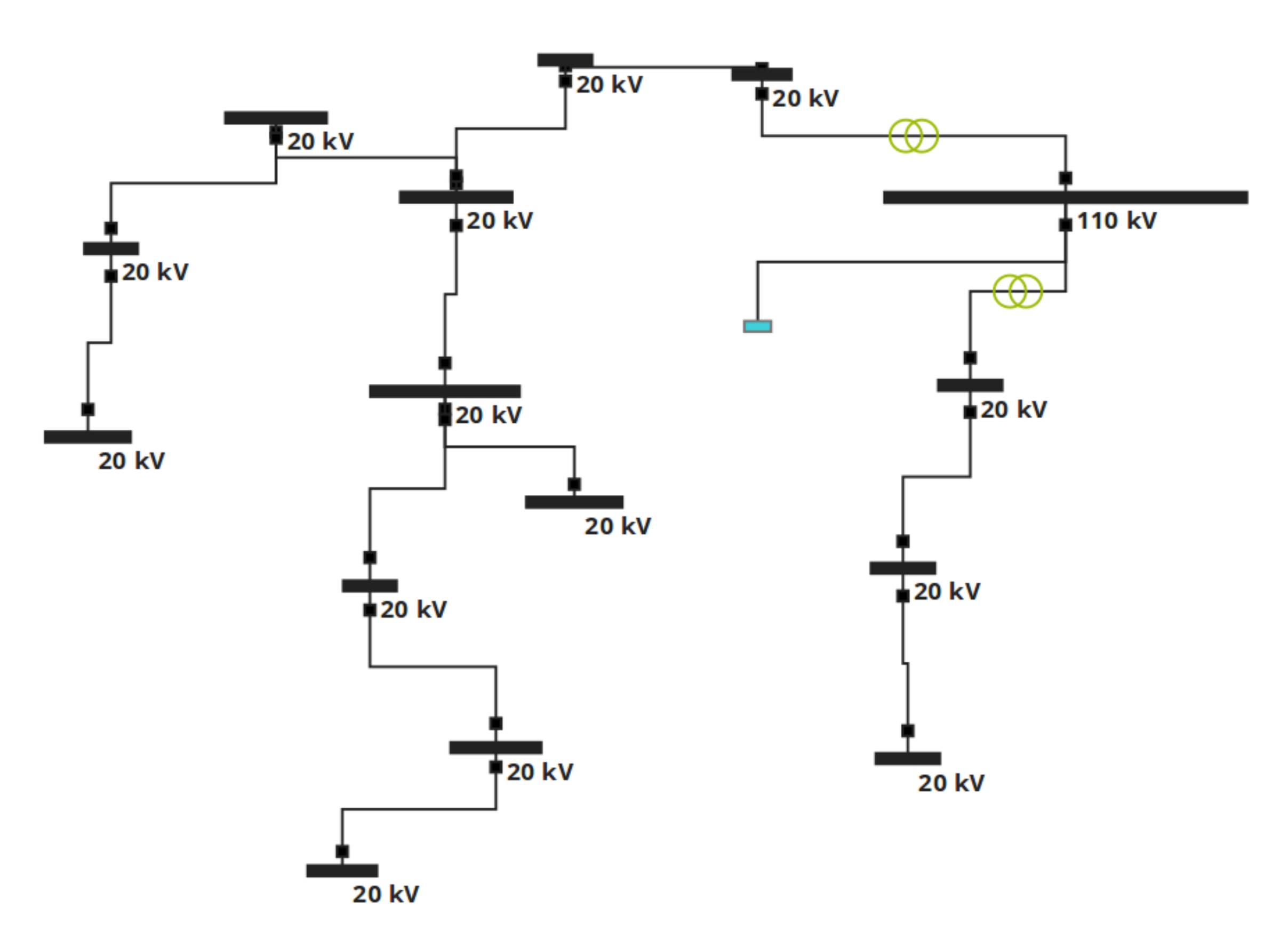}
  \caption{Rural MV grid used for the simulation experiment. Line admittances
    were chosen to be in range typically used in rural distribution grids.
    Note that while the grid here is radial, this is not a required assumption
    for the proposed method. The cyan bus in the middle is the slack bus,
  modeled here as a bus with no power limits.}
  \label{fig:mvruralgrid}
\end{figure}
\begin{figure}[t]
  \includegraphics[width=\columnwidth]{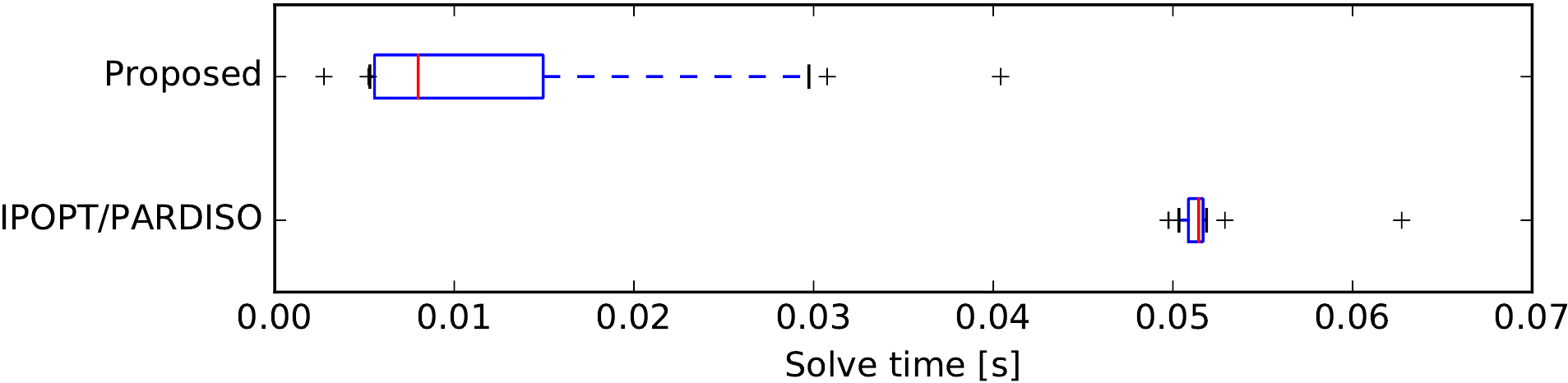}
  \caption{Box-plot of the distribution of solve times for the optimization
  problems solved in the simulation experiment. The boxes contain 50\% of the
  cases, the interval marked by the dashed lines contains 90\% of the cases.
  The plus signs mark outliers.}
  \label{fig:numres5}
\end{figure}
In order to demonstrate the effectiveness of warm-starting the presented
method, a power system time simulation experiment is presented in this section.
The experiment was run with the test grid shown in
Figure~\ref{fig:mvruralgrid}. Three different approaches to dealing with
voltage violations were tested: 
\begin{enumerate}[(i)]
  \item \emph{Rule-based curtailment}: In this control scheme, no optimization is
    run, instead the renewable in-feeds of the grid are simply curtailed down
    to a fixed fraction of their rated in-feed. This case reflects current
    industry practice.
  \item \emph{MATPOWER OPF-based curtailment}: In this scheme,
    problem~\eqref{eqn:opf0} is solved to local optimality with MATPOWER at
    its default settings. The 1-norm cost was implemented using MATPOWER's 
    piecewise affine cost function functionality.
  \item \emph{Proposed method OPF-based curtailment}: 
    Problem~\eqref{eqn:opf0} is solved to local optimality, but with the method
    presented in this paper. The solver is warm-started with the solution from
    the previous solve when available. The inner problems were solved with the
    presented dual gradient method, which was limited to 200 inner iterations.
\end{enumerate}
As a simulation environment, Adaptricity DPG.sim~\cite{KOCH} was used. At each
simulation time step, the operational
limits~\eqref{eqn:kirchhoff},~\eqref{eqn:vlim} and~\eqref{eqn:llim} were
checked. If any of them were violated, one of the approaches above was invoked.
The resulting power in-feed and voltage profiles for the different approaches
are shown in Figures~\ref{fig:numres3} and~\ref{fig:numres4}, respectively. As
can be seen in the uppermost subplots of the two figures, the profiles obtained
by using the rule-based curtailment controller have strong fluctuations due to
the controller intervening non-smoothly when violations are detected. Both the
voltage and power profiles are much smoother if the optimization-based
intervention solving~\eqref{eqn:opf0} is performed. Even though only local
optima are found both in MATPOWER and the presented method, these smoother
profiles were observed in all simulations. Additionally, even though the
different numerical approaches often yield different local minima, the
difference in cost function values is minor.
The distribution of solve times for the simulation is presented in
Figure~\ref{fig:numres5}. As can be seen, the average solve time of the
proposed method is only about 14\% of the state of the art. This directly
results in a speedup of up to factor 7 in the simulations.

Finally, due to the less severe interventions, much less curtailment is
required, resulting in a significant increase of renewable energy integrated.
The typical increase in RES in-feed is in the 20--40\% in yearly simulations,
but the specific value depends strongly on the grid topology and available
amount of renewable in-feed capacity.

\section{Conclusion}
\label{sec:conclusion}
This paper presented an alternative approach to dealing with over-voltage
problems in distribution grids with an OPF-based approach that leads to minimum
intervention by the DSO. Along with a formulation of the optimization problem,
a novel method to solve it to local optimality was presented that can be
warm-started and significantly outperforms current state-of-the-art
interior-point methods. The presented method can easily be extended to
other optimization problems involving AC power flow constraints. 

\section*{Acknowledgments}
\label{sec:acknowledgements}
This work was supported by the Swiss Commission for Technology and Innovation
(CTI), (Grant 16946.1 PFIW-IW). We also thank the team at Adaptricity (Stephan
Koch, Andreas Ulbig and Francesco Ferrucci) for providing the simulation
environment and valuable discussions in the area of power systems.

\appendix
\section{Appendix}
\subsection{Derivation of the dual problem}
\label{ssec:app_dualderi}
In this section, a detailed derivation of the step
from~\eqref{eqn:dcinner_lifted} to~\eqref{eqn:dcinner_dual} is given. First,
notice that the only non-zeros entries of $c$ in~\eqref{eqn:dcinner_lifted} are
those corresponding to the slack variables $u$ introduced in~\eqref{eqn:qclp}.
Moreover, the constraints corresponding to the cost function reformulation do 
not need penalties, since they are satisfied by construction. Recalling 
$x = \bmb z^T & u^T \bme^T$, we can rewrite~\eqref{eqn:dcinner_lifted} as
\begin{subequations}
  \label{eqn:dcinner_lifted_r1}
\begin{align}
  \minim_{x,y,u,t} & \;\; 1^Tu + \beta^k 1^Tt \label{eqn:liftedcost_r1}\\ 
      \st & \;\; A_1z + B_1y - b_1 \le t, \label{eqn:con_liftlin_r1} \\
          & \;\; A_2z + B_2y - b_2 \le u, \label{eqn:con_liftlin2_r1} \\
          & \;\; \diag(z)z - y \le 0, \label{eqn:con_diagxy_r1}\\
          & \;\; t \ge 0, \label{eqn:con_tpos_r1} \\
          & \;\; u \ge 0. \label{eqn:con_tpos2_r1}
\end{align}
\end{subequations}
In~\eqref{eqn:dcinner_lifted_r1}, constraints~\eqref{eqn:con_liftlin_r1}
contain all the actual constraints (originally~\eqref{eqn:opf1_con}),
whereas~\eqref{eqn:con_liftlin2_r1} contains all constraints resulting from the
cost function reformulation. In the 1-norm cost formulation, a trick has been
applied: Normally, a cost of $|w|$ for some variable $w$ would be replaced by
one slack variable $s$ and then a problem 
\[ 
\begin{aligned}
  \minim_{w,s} &\;\; s + \text{(other costs)} \\
   \st &\;\; w \le s,\; -w \le s,  \\
       &\;\; \text{(other constraints)},
\end{aligned}
\]
solved. An equivalent formulation to this is to introduce two slack variables
$s_1,s_2$ and solve 
\[ 
\begin{aligned}
  \minim_{w,s} &\;\; s_1 + s_2 + \text{(other costs)} \\
   \st &\;\; w \le s_1,\; -w \le s_2, \\
       &\;\; s_1 \ge 0, s_2 \ge 0, \\
       &\;\; \text{(other constraints)}.
\end{aligned}
\]
The equivalence is easily shown: One of $-w,w$ is always negative, leading to
one of $s_1,s_2$ becoming $0$, and the cost being equivalent to the more
standard formulation. Because this alternative formulation was used, one can
treat the $t$ and $u$ 
in~\eqref{eqn:dcinner_lifted_r1} the same and rewrite the latter once more as 
\begin{subequations}
  \label{eqn:dcinner_lifted_r2}
\begin{align}
  \minim_{x,y,t} & \;\; 1^Tt \label{eqn:liftedcost_r2}\\ 
      \st & \;\; Cz + Dy - d \le t, \label{eqn:con_liftlin_r2} \\
          & \;\; \diag(z)z - y \le 0, \label{eqn:con_diagxy_r2}\\
          & \;\; t \ge 0, \label{eqn:con_tpos_r2} \\
\end{align}
\end{subequations}
where $t$ now has a larger dimension than the $t$ in~\eqref{eqn:dcinner_lifted_r1}
and 
\[ C := \bmb \beta^k A_1 \\ A_2 \bme, \quad D := \bmb \beta^k B_1 \\ B_2 \bme,
  \quad d := \bmb \beta^k b_1 \\ b_2 \bme. 
\]
At this point, let $\lambda, \mu$ and $\gamma$ be the dual multipliers 
for the constraints~\eqref{eqn:con_liftlin_r2},~\eqref{eqn:con_diagxy_r2}
and~\eqref{eqn:con_tpos_r2}, respectively. The Lagrangian
of~\eqref{eqn:dcinner_lifted_r2} then becomes
\begin{equation}
  \begin{aligned}
    L(z,y,t,\lambda,\mu,\gamma) =\;\;& 1^Tt + \lambda^T(Cz + Dy - d - t) \\
    &+ \mu^T(\diag(z)z-y) + \gamma(-t).
  \end{aligned}
\end{equation}
Setting the partial derivatives to 0 yields
\begin{subequations}
\begin{align}
  1 - \lambda - \gamma &= 0, \label{eqn:dldt} \\
  C^T\lambda + 2\diag(\mu)z &= 0, \label{eqn:dldz} \\
  D^T\lambda - \mu &= 0. \label{eqn:dldy}
\end{align}
\end{subequations}
Equation~\eqref{eqn:dldz} implies that 
\[ z^*(\lambda,\mu) = -\frac{1}{2}\diag(\mu)^{-1}C^T\lambda. \]
The dual problem hence becomes
\begin{subequations}
  \label{eqn:dcinner_dual2}
\begin{align}
  \maxim_{x,y,t} & \;\; -\frac{1}{4}\lambda^TC\diag(\mu)^{-1}C^T\lambda - d^T\lambda \\
      \st & \;\; \lambda,\gamma,\mu \ge 0, \\
          &\;\;  1 - \lambda - \gamma = 0, \\
          &\;\; D^T\lambda - \mu = 0.
\end{align}
\end{subequations}
Upon closer inspection of~\eqref{eqn:dcinner_dual2}, it can be seen
that $\gamma$ and $\mu$ can be eliminated to yield
\begin{subequations}
  \label{eqn:dcinner_dual3}
\begin{align}
  \maxim_{x,y,t} & \;\; -\frac{1}{4}\lambda^TC\diag(D^T\lambda)^{-1}C^T\lambda - d^T\lambda \\
      \st & \;\; 0 \le \lambda \le 1, \label{eqn:dualcon1} \\
          &\;\; D^T\lambda \ge 0.\label{eqn:dualcon2}
\end{align}
\end{subequations}
Finally, since all entries of $D$ are non-negative, constraints~\eqref{eqn:dualcon1}
imply~\eqref{eqn:dualcon2}, and the latter can therefore be removed, resulting
in the formulation~\eqref{eqn:dcinner_dual} presented in the main text.

\bibliographystyle{plain}
\bibliography{merkli_etal_fast_ac_power_flow_optimization_using_dc_programming_draft2}
\end{document}